\newtheorem{proposition}{Proposition}
\newtheorem{lemma}{Lemma}
\newtheorem{theorem}{Theorem}
\theoremstyle{definition}
\newtheorem{example}{Example}
\theoremstyle{remark}
\newtheorem {remark}{Remark}
\DeclareMathOperator{\Aut}{Aut}
\DeclareMathOperator{\Bir}{Bir}
\DeclareMathOperator{\PGL}{PGL}
\def\GG{{\mathbb G}}
\def\CC{{\mathbb C}}
\def\KK{{\mathbb K}}
\def\PP{{\mathbb P}}
\def\AA{{\mathbb A}}
\def\mm{\mathfrak{m}}
\renewcommand{\phi}{\varphi}
\renewcommand{\ge}{\geqslant}
\renewcommand{\le}{\leqslant}
\begin{document}
\date{}
\title[On conjugacy of additive actions]{On conjugacy of additive actions in the affine Cremona group}
\author{Ivan Arzhantsev}
\address{Faculty of Computer Science, HSE University, Pokrovsky Boulevard 11, Moscow, 109028 Russia}
\email{arjantsev@hse.ru}
\thanks{Supported by the Russian Science Foundation grant 23-21-00472}
\subjclass[2010]{Primary 14L30, 14R10; \ Secondary 13E10, 14E07, 14J50}
\keywords{Algebraic variety, projective space, automorphism, birational isomorphism, Cremona group, algebraic group action, local algebra, Hassett-Tschinkel correspondence} 

\maketitle
\begin{abstract}
An additive action on an irreducible algebraic variety $X$ is an effective action $\GG_a^n\times X\to X$ with an open orbit of the vector group $\GG_a^n$. Any two additive actions on $X$ are conjugate by a birational automorphism of $X$. We prove that, if $X$ is the projective space, the conjugating element can be chosen in the affine Cremona group and it is given by so-called basic polynomials of the corresponding local algebra.  
\end{abstract} 


\section{Introduction}
\label{sec1}

Let $\KK$ be an algebraically closed field of characteristic zero and $\GG_a=(\KK,+)$ be the additive group of the field $\KK$. It is well known that any commutative unipotent linear algebraic group over $\KK$ is isomorphic to the vector group $\GG_a^n$. 

An additive action on an irreducible algebraic variety $X$ is an effective action $\GG_a^n\times X\to X$ with an open orbit. In the study of additive actions it is natural to look for analogies with  toric geometry. Namely, we consider the multiplicative group $\GG_m$ of the field $\KK$ and an algebraic torus $T=\GG_m^n$. Let us recall that a normal variety $X$ is toric if it admits a regular action $T\times X\to X$ with an open orbit. 

It is known that an effective torus action with an open orbit on a toric variety $X$ is unique up to automorphism of $X$. Namely, if $X$ is a complete toric variety, it is shown in~\cite{De-1} that the automorphism group $\Aut(X)$ is a linear algebraic group, and the claim follows from the fact that any two maximal tori in a linear algebraic group are conjugate. For an arbitrary toric variety more specific arguments are needed; see~\cite[Theorem~4.1]{Be}. 

At the same time, many varieties $X$ admit several non-equivalent additive actions. For the first time, this effect was observed by Hassett and Tschinkel in the case of projective spaces. More precisely, in~\cite{HT} a correspondence between commutative associative unital local algebras $A$ with $\dim A=n+1$ and additive actions on $\PP^n$ was established; see also~\cite{KL}. Using a classification of local algebras of small dimensions, one can conclude that starting from $n=2$ an additive action on $\PP^n$ is not unique, and starting from $n=6$ there are infinitely many
equivalence classes of such actions.    

During last decades, many results on additive actions on complete algebraic varieties were obtained. In particular, results on uniqueness of an additive action on non-degenerate quadrics and, more generally, arbitrary non-degenerate projective hypersurfaces, flag varieties, and other types of varieties can be found in~\cite{AZ,De,DKM,Dz-2,FH,Sh}. On the other hand, examples of non-equivalent additive actions on complete varieties are given in~\cite{ABZ,AP,AR,Bel,Dz-1,Liu,Sha-2}. For a recent survey of these results, see~\cite{AZ}. 

Returning to linear algebraic groups, it is well known that in any such group all maximal unipotent subgroups are conjugate.  At the same time, every additive action on $\PP^n$ gives rise to
a separate conjugacy class of maximal commutative unipotent subgroups in ${\PGL(n+1)}$. Moreover, one can construct commutative unipotent subgroups in $\PGL(n+1)$ whose dimension is greater than~$n$. 

\smallskip

This note originates from attempts to analyze these effects. It is easy to see that any two additive actions on a variety $X$ are birationally equivalent or, in other words, they are conjugate in the group of birational automorphisms of $X$.  More precisely, if two additive actions on $X$ share the same open orbit $U$, they are conjugate in the automorphism group $\Aut(U)$; see Proposition~\ref{prop1}. Since the orbit $U$ is isomorphic to an affine space, the group $\Aut(U)$ is the affine Cremona group. 

The aim of this note is to show that in the case of the projective space $\PP^n$ an element that conjugates a given additive action to the standard one has a remarkable interpretation in terms of the Hassett-Tschinkel correspondence. Namely, any local algebra $A$ with $\dim A=n+1$ defines a so-called basic subspace $V$ in the polynomial algebra $\KK[x_1,\ldots,x_n]$. This subspace can be defined in terms of exponents of elements in $A$; see Section~\ref{sec3} for details. The subspace $V$ is $(n+1)$-dimensional, invariant under all translations
${(x_1+c_1,\ldots,x_n+c_n)}$, $c_i\in\KK$, and generates the algebra $\KK[x_1,\ldots,x_n]$. Moreover, equivalence classes of such subspaces are in natural bijection with isomorphism classes of $(n+1)$-dimensional local algebras; see~\cite[Theorem~2.14]{HT} and \cite[Theorem~1.48]{AZ}. 

We prove that a basis $1, f_1,\ldots,f_n$ of the basic subspace $V$ defines an automorphism $x_1\mapsto f_1,\ldots,x_n\mapsto f_n$ from the affine Cremona group that conjugates the additive action corresponding to the algebra $A$ to the standard additive action; see Theorem~\ref{tmain}.  This result shows that one conjugacy class of $\GG_a^n$-subgroups in the affine Cremona group intersects the subgroups of the group $\PGL(n+1)$ in several $\PGL(n+1)$-conjugacy classes. Such an intersection is called in~\cite{Bl} the trace on $\PGL(n+1)$ of a conjugacy class in $\Aut(U)$; see~\cite{Bl} and references therein for results on an interplay of conjugacy classes of elements in the groups we are dealing with. Also Theorem~\ref{tmain} provides a new characterization of basic subspaces in the polynomial algebra.

\smallskip

The author is grateful to Ivan Beldiev and Yulia Zaitseva for useful comments.


\section{General results on conjugation}
\label{sec2}

We denote by $\Aut(X)$ the group of regular automorphisms and by $\Bir(X)$ the group of birational automorphisms of an irreducible algebraic variety $X$. Let $\Aut(n,\KK)$ be the group of $\KK$-automorphisms of the polynomial algebra $\KK[x_1,\ldots,x_n]$ and $\Bir(n,\KK)$ be the group of $\KK$-automorphisms of its field of fractions $\KK(x_1,\ldots,x_n)$. The group $\Bir(n,\KK)$ is the Cremona group; it can be identified with $\Bir(\AA^n)$, with $\Bir(\PP^n)$, and with $\Bir(X)$ for any rational variety $X$. In particular, the group $\Bir(n,\KK)$  coincides with $\Bir(X)$ for any irreducible variety $X$ admitting an additive action. 

The group $\Aut(n,\KK)$ is called the affine Cremona group; it can be identified with the group $\Aut(\AA^n)$. The group $\Aut(\PP^n)$ is much smaller, it is isomorphic to $\PGL(n+1)$. 

\begin{proposition} \label{prop1}
Let $X$ be an irreducible algebraic variety. Then
\begin{enumerate}
\item
any two additive actions on $X$ are conjugate in the group $\Bir(X)$;
\item
if two additive actions have the same open orbit $U$ in $X$, then these actions are conjugate in the affine Cremona group $\Aut(U)\subseteq\Bir(X)$. 
\end{enumerate}
\end{proposition}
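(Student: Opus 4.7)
The plan is to reduce both statements to the observation that the open orbit of any additive action is a $\GG_a^n$-torsor isomorphic to $\AA^n$, and then to transfer the canonical trivialization of this torsor from one action to the other via orbit maps.

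First I would verify that the restriction of the $\GG_a^n$-action to its open orbit $U$ is free. Pick $p\in U$; its stabilizer $H$ is a closed subgroup of the commutative group $\GG_a^n$ and is therefore normal, so for every $q=g\cdot p\in U$ and every $h\in H$ one has $h\cdot q=g\cdot h\cdot p=q$. Hence $H$ fixes $U$ pointwise, and since $U$ is dense in $X$, effectiveness forces $H=0$. The orbit map $\phi\colon\GG_a^n\to U$, $g\mapsto g\cdot p$, is then a bijective morphism of smooth irreducible varieties; in characteristic zero this implies that $\phi$ is an isomorphism (for instance by Zariski's Main Theorem applied to the normal target $U$). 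In particular $U\cong\AA^n$, so $\Aut(U)=\Aut(n,\KK)$ is the affine Cremona group.

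For part (2), let $\alpha_1,\alpha_2$ be two additive actions on $X$ sharing the same open orbit $U$; fix $p\in U$ and consider the orbit maps $\phi_i\colon\GG_a^n\to U$, $\phi_i(g)=\alpha_i(g,p)$. Both are isomorphisms by the previous paragraph, so $\psi:=\phi_2\circ\phi_1^{-1}$ is an element of $\Aut(U)$. The identities $\phi_i(g+h)=\alpha_i(g,\phi_i(h))$ translate, after substituting $h=\phi_1^{-1}(q)$, into $\psi(\alpha_1(g,q))=\alpha_2(g,\psi(q))$ for every $q\in U$, which is precisely the desired conjugation relation in $\Aut(U)\subseteq\Bir(X)$.

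For part (1), if the open orbits $U_1,U_2$ of $\alpha_1,\alpha_2$ are different, I would run exactly the same argument with orbit maps $\phi_i\colon\GG_a^n\to U_i$ to obtain a regular isomorphism $\psi\colon U_1\to U_2$ intertwining the two actions; since $U_1$ and $U_2$ are dense open subsets of $X$, the map $\psi$ extends canonically to a birational self-map of $X$ and gives the required element of $\Bir(X)$. The only delicate point in the whole proof is promoting the bijective orbit map $\phi$ to an isomorphism; the rest is a formal manipulation of orbit maps for torsors under the fixed commutative group $\GG_a^n$.
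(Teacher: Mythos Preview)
Your proof is correct and follows essentially the same approach as the paper: both arguments identify the open orbit with $\GG_a^n$ via orbit maps (using that effectiveness plus commutativity forces freeness) and obtain the conjugating element as the change-of-coordinates between the two resulting affine-space structures on $U$. You simply make explicit the points the paper leaves implicit (the freeness argument and why the bijective orbit map is an isomorphism in characteristic zero), and you handle part~(1) directly rather than first reducing to part~(2), but the underlying idea is identical.
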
 

\begin{proof}
Let $U_1$ and $U_2$ be the open orbits of two given additive actions on $X$. Then an isomorphism of varieties $\psi\colon U_1\to U_2$ defines a birational automorphism of $X$, and conjugating by this automorphism, we may assume that the two additive actions have the same open orbit $U$. 

So it suffices to prove claim~(2). By definition of an additive action, the transitive action $\GG_a^n\times U\to U$ is effective, so it is free. Fixing a point $x_0\in U$, the orbit map
$\GG_a^n\to U$, $g\mapsto g\cdot x_0$ defines an isomorphism $\GG_a^n\cong U$. Having two such isomorphisms corresponding to the two given additive actions, we obtain two structures of the affine space on the open subset $U$. The passage from one such structure to another gives rise to an element of the group $\Aut(U)$ that conjugates the first additive action to the second one. 
\end{proof} 

\begin{remark}
Proposition~\ref{prop1}~(1) reflects a more general and rather obvious fact: any two actions of an algebraic group $G$ with an open orbit on an irreducible variety $X$ which have the same generic stabilizers are birationally equivalent. 
\end{remark}

\begin{remark}
It follows from the description of additive actions on projective spaces that in the case $X=\PP^n$ the map $\psi$ from the proof above is an automorphism of the variety $X$. We do not know whether it is the case in general. 
\end{remark} 

We finish this section with one more general observation. Let $\alpha\colon\GG_a^n\times X\to X$ be an additive action on an irreducible variety $X$. Denote by 
$\beta\colon\GG_a^n\to\Aut(X)$ the corresponding injective homomorphism and consider the subgroup $H=\beta(\GG_a^n)$ in $\Aut(X)$.  

\begin{proposition} 
The subgroup $H$ is a maximal connected commutative algebraic subgroup both in $\Aut(X)$ and $\Bir(X)$. 
\end{proposition}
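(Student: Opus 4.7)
The plan is to establish the stronger claim that any connected commutative algebraic subgroup $H' \subseteq \Bir(X)$ containing $H$ must coincide with $H$; the $\Aut(X)$-case then follows immediately from the inclusion $\Aut(X)\subseteq\Bir(X)$. The underlying idea is that since $H\cong\GG_a^n$ already acts freely and transitively on its open orbit $U\subseteq X$, any birational self-map of $X$ commuting with all of $H$ is forced to act on $U$ as a single element of $H$.

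Concretely, I would fix a point $x_0\in U$. The orbit map $H\to U$, $h\mapsto h\cdot x_0$, is an isomorphism of algebraic varieties (by the freeness argument already used in the proof of Proposition~\ref{prop1}), so via the composition $\AA^n\cong\GG_a^n\cong H\cong U$ we may identify $U$ with $\AA^n$ in such a way that the action of $H$ on $U$ becomes the translation action of $\GG_a^n$ on $\AA^n$. For any $h'\in H'$, commutativity of $H'$ yields $h'\circ h=h\circ h'$ as birational self-maps of $X$ for every $h\in H$. Restricting to the dense open subset $U$ produces a birational self-map $\varphi$ of $\AA^n$ satisfying the functional equation
\[
\varphi(x+t)=\varphi(x)+t
\]
as an identity of rational maps in $x$, for each $t\in\AA^n$.

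I would then show that every such $\varphi$ is itself a translation. Pick any point $a\in\AA^n$ where $\varphi$ is regular, and set $c:=\varphi(a)-a$. Substituting $x=a$ in the functional equation gives $\varphi(a+t)=(a+t)+c$ on a dense open subset of values of $t$, and since both sides are rational in $t$ this identity extends everywhere. Hence $\varphi$ coincides with translation by $c$, which under our identification is the action of a unique element $h_0\in H$. Since two birational self-maps of $X$ that agree on the dense open $U$ are equal in $\Bir(X)$, we conclude $h'=h_0\in H$; therefore $H'\subseteq H$, and so $H'=H$.

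The main point requiring care is the passage from the equality $h'\circ h=h\circ h'$ in $\Bir(X)$ to the functional equation on $\AA^n$: a priori the relevant compositions are only equal on the intersection of their domains of regularity, and one must invoke the standard fact that an equality of rational maps on a dense open subset is a global equality of rational maps. Once this subtlety is absorbed, the remaining steps are essentially formal manipulations with the translation action.
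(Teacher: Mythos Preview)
Your argument is correct, but it takes a genuinely different route from the paper. The paper argues by contradiction via Weil's Regularization Theorem: if $G\supsetneq H$ were a larger connected commutative algebraic subgroup of $\Bir(X)$, one regularizes the $G$-action on some model $Y$ birational to $X$; since $\KK(X)^H=\KK$ one gets $\KK(Y)^G=\KK$, forcing $G$ to have an open orbit on $Y$, which contradicts $\dim G>n=\dim Y$. Your approach instead computes the centralizer of $H$ in $\Bir(X)$ directly: any birational self-map commuting with all translations on $U\cong\AA^n$ is itself a translation, hence lies in $H$. This is more elementary (no regularization, no invariant-field argument) and in fact proves something strictly stronger: $H$ is its own centralizer in $\Bir(X)$ as an abstract group, so maximality holds among \emph{all} commutative subgroups, not just connected algebraic ones. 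The paper's argument, on the other hand, is more structural and would adapt more readily to situations where the open orbit is not a principal homogeneous space (e.g.\ actions with nontrivial generic stabilizer), where your pointwise centralizer computation would not immediately apply.
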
 

\begin{proof}
It suffices to prove the claim for the group $\Bir(X)$. Assume that $G$ is a connected commutative algebraic subgroup in $\Bir(X)$ that contains the subgroup $H$ properly.
Then $\dim G>n$. On the other hand, by the Regularization Theorem of Andr\'e Weil~\cite{We} (see~\cite{Kr} for a modern proof) there exist a variety $Y$ with a regular action of $G$ and a $G$-equivariant birational map $X\to Y$. Since the group $H$ acts on $X$ with an open orbit, the field of rational invariants $\KK(X)^H$ coincides with $\KK$. But the group $G$ contains $H$, so we have $\KK(X)^G=\KK(Y)^G=\KK$. We conclude that the effective regular action $G\times Y\to Y$ has an open orbit. This contradicts the condition $\dim G>n=\dim Y$. 
\end{proof} 

At the same time, for $n\ge 2$ the groups $\Aut(n,\KK)$ and $\Bir(n,\KK)$ contain commutative unipotent algebraic subgroups of arbitrary dimensions: one can take all transformations of the form
$$
(x_1+f(x_2),x_2,\ldots,x_n),
$$ 
where $f$ runs through all polynomials of degree less than $d$ for some positive integer $d$. 

\section{Hassett-Tschinkel correspondence}
\label{sec3}

In~\cite{HT}, a bijective correspondence between additive actions on the projective space $\PP^n$ and commutative associative unital local algebras $A$ with $\dim A=n+1$ is established; see also~\cite{KL}. Let us fix a basis $s_1,\ldots,s_n$ in the maximal ideal $\mm$ of the algebra $A$. Under this correspondence, the space $\PP^n$ is identified with the projectivization $\PP(A)$, and an element $y=(y_1,\ldots,y_n)\in\GG_a^n$ acts on $\PP(A)$ by multiplication by an element $\exp(y_1s_1+\ldots+y_ns_n)$.

A classification of local algebras up to dimension~6 is given in~\cite{HT} with a reference to~\cite{ST}; see also~\cite[Table~1]{AZ}. This classification was obtained independently in~\cite{Ma}; one more approach can be found in~\cite{Po}.  As a result of the classification, we have the following number of isomorphism classes of local algebras of dimension~$n+1$: 
\[
\begin{array}{c|c|c|c|c|c|c|c}
n +1 & 1 & 2 & 3 & 4 & 5 & 6 & \ge 7\\\hline
& 1 & 1 & 2 & 4 & 9 & 25 & \infty
\end{array}
\eqno (1)
\]
With any local algebra $A$ one associates polynomials $f_1,\ldots,f_n\in\KK[x_1,\ldots,x_n]$ defined be the formula
$$
\exp(x_1s_1+\ldots+x_ns_n)=1+f_1(x)s_1+\ldots+f_n(x)s_n.
$$
Choosing a basis $s_1,\ldots,s_n$ in $\mm$ compatible with the filtration 
$$
\mm\supseteq\mm^2\supseteq\ldots\supseteq\mm^{d-1}\supseteq\mm^d=0,
$$
we observe that each polynomial $f_i$ has the form $x_i+h_i(x_1,\ldots,x_{i-1})$. So the endomorphism of the algebra $\KK[x_1,\ldots,x_n]$ given by $x_1\to f_1(x),\ldots,x_n\to f_n(x)$ is a triangular automorphism.  

Let us give three examples illustrating the concepts discussed above. In all cases we realize a local algebra $A$ as a factor of the polynomial algebra $\KK[S_1,\ldots,S_m]$, and denote by
$s_i$ the image of $S_i$ in $A$. 

\begin{example}
Take a local algebra $A=\KK[S_1,\ldots,S_n]/(S_iS_j, 1\le i,j\le n)$. We have 
$$
\exp(x_1s_1+\ldots x_ns_n)=1+x_1s_1+\ldots x_ns_n.
$$
It corresponds to an additive action on $\PP^n$ given by
$$
(y_1,\ldots,y_n)\circ [z_0:z_1:\ldots :z_n]=[z_0:z_1+y_1z_0:\ldots:z_n+y_nz_0].  
$$
We call this additive action standard. Here the basic polynomials are $f_1=x_1,\ldots,f_n=x_n$. 
\end{example} 

\begin{example}
Take $A=\KK[S_1]/(S_1^3)$ with the basis $s_1, s_2=s_1^2$ in $\mm$. We have
$$
\exp(x_1s_1+x_2s_2)=1+x_1s_1+(x_2+\frac{x_1^2}{2})s_2. 
$$
It corresponds to an additive action on $\PP^2$ given by
$$
(y_1,y_2)*[z_0:z_1:z_2]=[z_0:z_1+y_1z_0:z_2+y_1z_1+(y_2+\frac{y_1^2}{2})z_0].  
$$
In this case the basic polynomials are $f_1=x_1$ and $f_2=x_2+\frac{x_1^2}{2}$.
\end{example}

\begin{example}
Take a local algebra $A=\KK[S_1,S_2]/(S_1S_2, S_1^3-S_2^2)$ with the basis 
$$
s_1, \quad s_2, \quad s_3=s_1^2, \quad s_4=s_1^3=s_2^2
$$ 
in $\mm$. We have
$$
\exp(x_1s_1+x_2s_2+x_3s_3+x_4s_4)=1+x_1s_1+x_2s_2+(x_3+\frac{x_1^2}{2})s_3+(x_4+x_1x_3+\frac{x_2^2}{2}+\frac{x_1^3}{6})s_4. 
$$
It corresponds to an additive action on $\PP^4$ given by
$$
(y_1,y_2,y_3,y_4)*[z_0:z_1:z_2:z_3:z_4]=[z_0:z_1+y_1z_0:z_2+y_2z_0:
$$
$$
:z_3+y_1z_1+(y_3+\frac{y_1^2}{2})z_0:z_4+y_1z_3+y_2z_2+(y_3+\frac{y_1^2}{2})z_1+(y_4+y_1y_3+\frac{y_1^2}{2}+\frac{y_1^3}{6})z_0].  
$$
In this case the basic polynomials are 
$$
f_1=x_1,\quad f_2=x_2, \quad f_3= x_3+\frac{x_1^2}{2}, \quad f_4=x_4+x_1x_3+\frac{x_2^2}{2}+\frac{x_1^3}{6}. 
$$
\end{example}

It is shown in~\cite[Lemma~1.40]{AZ} that the linear span $V=\langle 1,f_1,\ldots,f_n\rangle$ is an $(n+1)$-dimensional subspace that generates the algebra $\KK[x_1,\ldots,x_n]$ and is invariant under all translations $(x_1+c_1,\ldots,x_n+c_n)$, $c_i\in\KK$. Subspaces with these properties are called basic subspaces of the algebra $\KK[x_1,\ldots,x_n]$. It is proved in~\cite[Theorem~2.14]{HT} that $(n+1)$-dimensional local algebras are in bijection with basic subspaces in $\KK[x_1,\ldots,x_n]$; see also \cite[Theorem~1.48]{AZ}. In particular, the number of equivalence classes of basic subspaces in
$\KK[x_1,\ldots,x_n]$ is indicated in the table in~(1).


\section{The main result}
\label{sec4}

We are ready to formulate the main result of this note. 

\begin{theorem}\label{tmain}
The additive action on $\PP^n$ corresponding to a local algebra $A$ is conjugate to the standard additive action by the automorphism given by basic polynomials of the algebra~$A$.
\end{theorem}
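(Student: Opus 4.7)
The plan is to work on the open orbit common to both actions and reduce everything to the fact that $\exp$ is a group homomorphism from the additive to the multiplicative structure on the nilpotent ideal $\mm$. First I would fix the filtration-compatible basis $1,s_1,\ldots,s_n$ of $A$ used in the Hassett--Tschinkel correspondence, identify $\PP^n$ with $\PP(A)$, and pass to the standard affine chart $U\subset\PP^n$ where the coefficient of $1$ is nonzero; this identifies $U$ with $\AA^n$ through the coordinates $c=(c_1,\ldots,c_n)$ attached to the representative $1+c_1 s_1+\ldots+c_n s_n$. In these coordinates the $A$-action of $y\in\GG_a^n$ is the map $c\mapsto y\ast_A c$ determined by
$$
\exp(y\cdot s)\cdot(1+c\cdot s)=1+(y\ast_A c)\cdot s
$$
in $A$, where $y\cdot s=y_1 s_1+\ldots+y_n s_n$; for the algebra of Example~1 this reduces to ordinary translation $c\mapsto c+y$, which is by definition the standard additive action. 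Both actions therefore share $U$ as an open orbit, so Proposition~\ref{prop1}(2) already produces a conjugating element in $\Aut(U)$; the content of the theorem is that this element is $\phi\colon x_i\mapsto f_i(x)$.

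As noted just before the statement, with a filtration-compatible basis each $f_i$ has the form $x_i+h_i(x_1,\ldots,x_{i-1})$, so $\phi$ is a triangular automorphism of $\KK[x_1,\ldots,x_n]$ and hence an element of the affine Cremona group $\Aut(\AA^n)$. The required equivariance, which I would write as $\phi\circ T_y=(y\ast_A-)\circ\phi$ with $T_y$ standard translation by $y$, amounts to the polynomial identity $f(x+y)=y\ast_A f(x)$. This falls out of the multiplicativity of the exponential on $\mm$: by the defining formula for the basic polynomials,
$$
1+f(x+y)\cdot s=\exp((x+y)\cdot s)=\exp(y\cdot s)\cdot\exp(x\cdot s)=\exp(y\cdot s)\cdot(1+f(x)\cdot s),
$$
and by the definition of $\ast_A$ the last expression equals $1+(y\ast_A f(x))\cdot s$. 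Equating the coefficients of $s_1,\ldots,s_n$ gives the desired identity.

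I do not foresee a genuine obstacle; once the two actions are written in the same coordinate system on $U$, the argument is forced by $\exp((x+y)\cdot s)=\exp(x\cdot s)\exp(y\cdot s)$, which is legitimate because $\mm$ is nilpotent. The one point that needs care is keeping the two $\GG_a^n$-structures on $U$ separate --- the \emph{intrinsic} one coming from the group parameter $x$, through which the orbit map sends $y$ to $y\ast_A 0=f(y)=\phi(y)$, and the \emph{extrinsic} one given by standard translation in $c$ --- and checking equivariance at an arbitrary point rather than only at the base point. The multiplicativity of $\exp$ is precisely what upgrades the base-point equality $\phi(y)=y\ast_A 0$ to equivariance everywhere on $U$.
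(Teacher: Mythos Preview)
Your proof is correct and is essentially the same as the paper's: both restrict to the open orbit $U\cong 1+\mm$, identify the conjugating automorphism with the map $1+{\bf x}\mapsto\exp({\bf x})$ (equivalently $x\mapsto f(x)$ in coordinates), and then verify equivariance by the one-line computation $\exp({\bf x}+{\bf y})=\exp({\bf y})\exp({\bf x})$. Your write-up adds the explicit remark that $\phi$ is triangular (hence invertible) and invokes Proposition~\ref{prop1}(2), but the substance is identical.
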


\begin{proof}
Let ${\bf x}=x_1s_1+\ldots+x_ns_n$ and ${\bf y}=y_1s_1+\ldots+y_ns_n$ be two elements of the maximal ideal $\mm$ of the algebra~$A$. 
By definition of basic polynomials, we have 
$$
\exp({\bf x})=1+f_1(x)s_1+\ldots+f_n(x)s_n.
$$ 
Let us consider the map $\varphi\colon1+\mm \to 1+\mm$ given by $\varphi(1+{\bf x})=\exp({\bf x})$ as an automorphism of the affine space $1+\mm$. The inverse map is given by $\varphi^{-1}(1+{\bf x})=1+\ln(1+{\bf x})$. 

Let 
$$
\GG_a^n\times\PP^n\to\PP^n, \quad (y,[z_0,\ldots,z_n])\mapsto y\circ [z_0:\ldots:z_n]
$$ 
be the standard additive action on $\PP^n$ and 
$$
\GG_a^n\times\PP^n\to\PP^n, \quad (y,[z_0,\ldots,z_n])\mapsto y*[z_0:\ldots:z_n]
$$ 
be the additive action on $\PP^n$ corresponding to the local algebra~$A$. 

Clearly, an additive action on $\PP(A)$ is uniquely determined by its restriction to the invariant affine chart $1+\mm$, which is the open orbit of this action.

\smallskip

So the claim of Theorem~\ref{tmain} can be reformulated in the following form. 

\begin{lemma} \label{lll}
In the notation introduced above, we have 
$$
\varphi(y\circ(1+{\bf x}))=y*\varphi(1+{\bf x})
$$ 
for all $y\in\GG_a^n$ and ${\bf x}\in\mm$. 
\end{lemma}  

\begin{proof}
By definition of the standard additive action, we have $y\circ(1+{\bf x})=1+{\bf x}+{\bf y}$. So, we obtain 
$$
\varphi(y\circ(1+{\bf x}))=\varphi(1+{\bf x}+{\bf y})=\exp({\bf x}+{\bf y})=\exp({\bf y})\exp({\bf x})=y*\varphi(1+{\bf x}).      
$$
\end{proof}
The proof of Theorem~\ref{tmain} is completed.
\end{proof} 

\begin{remark}
It is easy to see that the group $A^{\times}$ of invertible elements of a local algebra $A$ with $\dim A=n+1$ is isomorphic to $\GG_m\times\GG_a^n$. At the same time, for non-isomorphic
local algebras $(A,+,\circ)$ and $(A',+,*)$ we can not establish a polynomial bijection $\varphi\colon A\to A'$ such that $\varphi(a\circ b)=\varphi(a)*\varphi(b)$ for all $a,b\in A$. Indeed, it is proved in \cite[Lemma~7]{ABZ} that for any two non-isomorphic finite-dimensional associative (not necessarily commutative) unital algebras $A$ and $A'$ their multiplicative monoids
$(A,\circ)$ and $(A,*)$ are not isomorphic as affine algebraic monoids. 
\end{remark} 

Summing up, Theorem~\ref{tmain} shows that one $\Aut(U)$-conjugacy class of $\GG_a^n$-subgroups in $\Bir(\PP^n)$ corresponding to additive actions on $\PP^n$ intersects
the set of subgroups of the group ${\PGL(n+1)}$ in several ${\PGL(n+1)}$-conjugacy classes, and the number of such classes is indicated in the table in~(1). 


\end{document}